\theoremstyle{plain}
\newtheorem{theorem}{Theorem}[section]
\newtheorem{lemma}[theorem]{Lemma}
\newtheorem{corollary}[theorem]{Corollary}
\theoremstyle{definition}
\newtheorem{definition}[theorem]{Definition}
\newtheorem{example}{Example}
\theoremstyle{remark}
\begin{document}

\title[ Integrable
geodesic flows on surfaces]
      {Integrable geodesic flows on surfaces}

\date{9 May 2009}
\author{Misha Bialy}
\address{Raymond and Beverly Sackler School of Mathematical Sciences, Tel Aviv University,
Israel} \email{bialy@post.tau.ac.il}

\subjclass[2000]{37J35, 37J50, 53D25, 70H07 }
\keywords{Geodesic
flows, first integrals, critical points, minimal geodesics}

\begin{abstract}
We propose a new condition $\aleph$ which enables to get new results
on integrable geodesic flows on closed surfaces. This paper has two
parts. In the first, we strengthen Kozlov's theorem on
non-integrability on surfaces of higher genus. In the second, we
study integrable geodesic flows on 2-torus. Our main result for
2-torus describes the phase portraits of integrable flows. We prove
that they are essentially standard outside, what we call, separatrix
chains. The complement to the union of the separatrix chains is
$C^0$-foliated by invariant sections of the bundle.
\end{abstract}

\maketitle

\section{Introduction}
\label{sec:intro} Let $\Sigma$ be a closed orientable surface of
genus $p\geq1$. Given a Riemannian metric $g$ on $\Sigma$, let
$g^t:T_1 \Sigma \rightarrow T_1 \Sigma$ be the corresponding
geodesic flow acting on the unite circle bundle of $\Sigma$. It is
an important question for dynamics and geometry if there exists a
smooth function $F:T_1\Sigma\rightarrow\mathbb{R}$ which is
invariant under the flow, that is $F(g^tx)=F(x)$. In this case $F$
is classically called a (first)integral of the geodesic flow. In
this case, Liouville-Arnold theorem \cite{Arn} implies that any
connected component $L$ of the level set $\{F=c\}$, which satisfies
$DF|_L\neq 0$, is a 2-torus invariant under the flow. Moreover, the
dynamics of geodesic flow on this torus is linearizable. We shall
use the following:
\begin{definition}A torus $L$ lying in a level of $F$ we will call
regular, if $DF|_L\neq 0$, on the other hand $L$ is singular if
there exists a point on $L$ where $DF$ vanishes.
\end{definition}

In the search of smooth integrals, one usually requires, some extra
condition on $F$ which prohibits from $F$ to be essentially
constant. For example, one usually requires the set of regular
points of $F$ to be dense. In this case the geodesic flow is called
integrable.

The following theorem was proved in 1979 by V.V. Kozlov see \cite
{Kozl} and also \cite{Koz}.
\begin{theorem}
\label{kozlov} Assume the genus $p>1$. Then any smooth integral
$F:T_1\Sigma\rightarrow\mathbb{R}$ satisfying the following two
conditions must be a constant.

    (K1) $F$ has finitely many critical values.

    (K2) For a dense set of points $x\in\Sigma$ the intersection of
    the fibre $\pi^{-1}(x)$ with any critical level $\{F=c\}$ is at
    most finite or coincides with the whole fibre.

\end {theorem}
 As a corollary Kozlov concluded that any
 real-analytic integral of the geodesic flow on
 surfaces of higher genus must be constant. The question on topological
obstructions to the integrability of geodesic flows has found
considerable interest, see e.g. the works by \cite{Tai1},
\cite{Tai2}, \cite{Pat1}, \cite{Pat2} for generalizations to higher
dimensions where various stronger assumptions on non-wildness of $F$
were proposed. On the other hand, new remarkable examples of
manifolds which cannot have analytically integrable geodesic flows,
but do admit a $C^\infty$-integrable geodesic flows were recently
discovered, see the works \cite {Bol},\cite{But1},\cite{But2}.
However, it seems that the question if such examples exist on
compact surfaces of genus greater than one is still open.

We approach this question with geometric idea which enables to relax
significantly the conditions of Kozlov's theorem. On the other hand,
we apply this method to the case of the 2-torus. For the 2-torus
there are two known classes of metrics with integrable geodesic
flows. These are rotationally symmetric metrics and, the so-called,
Liouville metrics. The question if there exist other examples of
integrable geodesic flows is widely open. By our method we get a
nontrivial information on the phase portraits for integrable
geodesic flows. We show that they are essentially standard outside,
what we call separatrix chains. Namely the complement to the union
of these chains is $C^0$ foliated by invariant sections. So all
dynamical complications could be located only inside the chains.

The first ingredient of the method is the theory of minimal
geodesics and rays on surfaces. It was invented by M.Morse
\cite{Morse} and G.Hedlund \cite{Hed} and further treated in
\cite{BP1} and also in \cite{Ban1} in connection with Aubry-Mather
theory. The second ingredient uses the properties of the projections
of Lagrangian torii started in \cite{BP2}, \cite{B2}, \cite{P} (we
refer to \cite{BP3} for generalizations and the references).

For the approach of this paper we shall require everywhere that the
metric $g$ and the function $F$ to be of class $C^3$, at least. This
is the minimal regularity needed in order to use the properties of
projections for Lagrangian torii and also for the Morse-Sard theorem
for $F$ which is used below.

In order to state the first result let me formulate the main
condition.
\begin{definition}
We shall say that $F:T_1\Sigma\rightarrow\mathbb{R}$ satisfies
\emph{condition $\aleph$}, if for a dense set of $x\in \Sigma$ the
intersection of
    the fibre $\pi^{-1}(x)$ with any connected component of the
    critical level $\{F=c\}$ is at
    most \emph{countable} or coincides with the whole fibre.
\end{definition}
\begin{example}
Any function which is real-analytic with respect to momenta
satisfies this condition. In particular polynomials with respect to
momenta are of great interest, since in all known examples of
integrable geodesic flows the integral appears to be polynomial in
momenta variables.
\end{example}

\begin {theorem}
\label{genus} Suppose genus $p>1$. Then any integral $F$ of the
geodesic flow satisfying condition $\aleph$ must be a constant.
\end {theorem}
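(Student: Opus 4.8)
The plan is to argue by contradiction: assume $F$ is non-constant and manufacture a forbidden dynamical–topological configuration out of $p>1$. First I would secure a good supply of regular pieces. Since $g$ and $F$ are $C^3$ and $\dim T_1\Sigma=3$, the Morse–Sard theorem applies to $F$, so its critical values form a set of measure zero and the regular values are dense in the interval $F(T_1\Sigma)$. For a regular value $c$, each compact component of $\{F=c\}$ is, by Liouville–Arnold, an invariant Lagrangian $2$-torus carrying a linear flow; non-constancy of $F$ forces a positive-measure range of such values, hence an abundance of regular invariant tori. The remaining, critical part of $T_1\Sigma$ is precisely what condition $\aleph$ is meant to keep thin inside the fibres.

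The geometric heart of the argument is the interaction of $F$ with minimal geodesics. The key elementary observation is that $F$ is constant along forward-asymptotic orbits: if $d(g^tx,g^ty)\to 0$ as $t\to+\infty$ then $F(x)=\lim_t F(g^tx)=\lim_t F(g^ty)=F(y)$ by continuity and invariance, and symmetrically for $t\to-\infty$. Here $p>1$ is decisive. Lifting to the universal cover and invoking the Morse–Hedlund–Bangert theory of minimal geodesics, each minimal geodesic is coded by its two distinct endpoints on the circle at infinity, and two minimal geodesics sharing a forward (resp. backward) endpoint are positively (resp. negatively) asymptotic. Changing one endpoint at a time then connects any two minimal geodesics through chains of asymptotic pairs, so $F$ is forced to be \emph{constant on the whole set $\mathcal{M}$ of minimal geodesics}.

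It remains to propagate this single value from $\mathcal{M}$ to all of $T_1\Sigma$, and this is where the second ingredient and condition $\aleph$ enter. I would use the theory of projections of invariant Lagrangian tori: since there is no covering $T^2\to\Sigma$ when $p>1$ (Euler characteristic $0\neq n(2-2p)$), the projection $\pi|_L$ of every regular torus $L$ is forced to develop folds, and the projection theory pins down how $L$ sits over $\Sigma$ and relates its orbits to minimizing ones. The aim is to show that each regular torus meets $\mathcal{M}$, so that $F|_L$ equals the common value on $\mathcal{M}$; since regular values are dense, this collapses the range of $F$ to a point and contradicts non-constancy. In this step condition $\aleph$ replaces Kozlov's hypotheses (K1) and (K2): Morse–Sard already removes the need for finitely many critical values, while $\aleph$ guarantees that over a dense set of base points every critical level meets the fibre in a countable (hence measure-zero) set unless it swallows the whole fibre. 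This fibrewise thinness of the critical locus is what lets the constant value cross critical levels and lets a value established on a dense invariant set be upgraded, by continuity of $F$, to a global constant.

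The step I expect to be the main obstacle is exactly this extension across the critical and non-minimal part using only the weak, \emph{countable} condition $\aleph$ in place of Kozlov's finiteness. Two difficulties must be met. Dynamically, the asymptotic mechanism degenerates when minimal geodesics bound flat strips, where the distance stays bounded but does not tend to zero, so the endpoint-connectivity argument must be run around the small exceptional set of such endpoints, presumably via recurrence of minimizing measures. Topologically, one must show that the fold structure of $\pi|_L$ forces genuine minimal geodesics onto $L$ rather than merely locally minimizing ones; here the countability in $\aleph$ is what prevents critical levels from walling off the regular region into components on which $F$ could take different constant values. Once these points are handled, the genus obstruction together with the constancy of $F$ on $\mathcal{M}$ closes the contradiction.
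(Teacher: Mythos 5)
Your outline goes wrong at both of its load-bearing steps, and in fact the second one is backwards relative to how the argument actually works. First, the claim that $F$ is constant on the set $\mathcal{M}$ of all minimal geodesics because geodesics sharing an endpoint at infinity satisfy $d(g^tx,g^ty)\to 0$ is not available: Morse's theory only gives that a minimal geodesic stays within \emph{bounded} distance of its type, and two minimal geodesics with the same endpoints can bound a strip on which the distance stays bounded away from zero (you acknowledge the flat-strip degeneration but offer no way around it, and ``recurrence of minimizing measures'' is not an argument). The paper never needs constancy of $F$ on $\mathcal{M}$ via asymptotics. Instead it sets $Z=T_1\Sigma-\bigcup_{L\in R}I(L)$, the complement of the ``interiors'' of all regular tori, proves $Z$ is a compact \emph{connected} invariant set (a nested intersection of compact connected sets, using second countability to reduce to a countable union), observes that $Z$ meets only critical levels, and then gets a \emph{single} critical value on $Z$ from Morse--Sard plus connectedness of $F(Z)$. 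No asymptotic chaining of $F$-values is involved.

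Second, your plan to ``show that each regular torus meets $\mathcal{M}$'' is the opposite of what is true and of what is needed. For $p>1$ every invariant torus is compressible, and Theorem \ref{I(L)} of the paper shows that minimal orbits lie \emph{neither on} a regular torus $L$ \emph{nor in} $I(L)$: an orbit on $L$ must cross the singularity curves of $\pi|_L$ (Theorem \ref{caustics}), each crossing raises the Morse index, so it cannot be minimizing; and a minimal orbit trapped in $I(L)$ is excluded using the separation-zero property of minimal geodesics of a fixed periodic type together with the fact that every fibre connects to that set by a ray, which would have to cross the invariant torus $L$. The conclusion is that all minimal rays are \emph{confined to} $Z$. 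The final step then uses condition $\aleph$ in a way quite different from the ``fibrewise measure-zero thinness'' you describe: through every point $x\in\Sigma$ there are \emph{uncountably} many rays (Theorem \ref{rays}), whose tangent vectors all lie in the single connected component of the single critical level containing $Z$; the countable-or-whole-fibre dichotomy of $\aleph$ then forces that component to contain the whole fibre over a dense set of $x$, whence $F$ is constant by continuity. As written, your proposal does not contain a proof; the two difficulties you flag at the end are exactly the places where the actual argument substitutes a different mechanism (topological confinement plus the uncountability of rays) for the one you propose.
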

\begin {corollary} There are no real-analytic with respect to
momenta integrals for geodesic flows on surface of higher genus
other than constants.
\end {corollary}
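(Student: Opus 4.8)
The plan is to deduce the corollary directly from Theorem \ref{genus} by checking that real-analyticity in the momenta forces condition $\aleph$. In fact I expect to verify the fibrewise requirement of $\aleph$ at \emph{every} point $x\in\Sigma$, not merely on a dense set, so that Theorem \ref{genus} applies without further ado.

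First I would fix a point $x\in\Sigma$ and examine the restriction of $F$ to the fibre $\pi^{-1}(x)\subset T_1\Sigma$. Identifying (via the metric) velocities with momenta, the fibre is the unit circle $\{p\in T_x^*\Sigma:\ g^*_x(p,p)=1\}$ in the momentum space $T_x^*\Sigma\cong\mathbb{R}^2$, which I parametrize by the angle $\theta$ by writing $p(\theta)=\cos\theta\,\varepsilon_1+\sin\theta\,\varepsilon_2$ in a fixed $g^*_x$-orthonormal coframe $(\varepsilon_1,\varepsilon_2)$ at $x$. Since $\theta\mapsto p(\theta)$ is trigonometric, hence real-analytic, and $F$ is by hypothesis real-analytic in the momenta, the composition $f(\theta):=F(x,p(\theta))$ is a real-analytic function on the circle $S^1$. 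Note that this argument fixes $x$ throughout, so it uses only analyticity of $F$ in the fibre direction and is insensitive to the mere $C^3$ regularity of $F$ and $g$ in the base variable.

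The key elementary fact is then that a real-analytic function on the compact connected $1$-manifold $S^1$ has, for each value $c$, a level set $f^{-1}(c)$ that is either all of $S^1$ or finite: the function $f-c$ is real-analytic, so if it is not identically zero its zeros are isolated, and isolated points in a compact set are finite in number. Translating back, $\{F=c\}\cap\pi^{-1}(x)$ is either the entire fibre or a finite set. If it is finite, its intersection with any connected component of $\{F=c\}$ is finite, hence countable; if it is the whole fibre, then since the fibre is connected it lies in a single component $L_0$ and meets every other component in the empty set. In either case the demand of condition $\aleph$ is satisfied at $x$, and as $x$ was arbitrary it holds on all of $\Sigma$.

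Having established condition $\aleph$, I would simply invoke Theorem \ref{genus} (recalling $p>1$) to conclude that $F$ is constant. There is essentially no genuine obstacle here; the only point deserving a word of care is the passage from analyticity in the momenta to analyticity along the fibre, which rests on the analytic parametrization $p(\theta)$ of the unit circle together with the fact that the restriction of a real-analytic function to an embedded real-analytic submanifold remains real-analytic.
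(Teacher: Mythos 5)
Your proposal is correct and matches the paper's own route: the paper proves the corollary by combining Theorem \ref{genus} with the observation (stated as an Example) that any integral real-analytic in the momenta satisfies condition $\aleph$, which is exactly your isolated-zeros argument for a real-analytic function on the fibre circle. You merely make explicit what the paper leaves implicit, even strengthening the Example's claim from a dense set of $x$ to every $x$.
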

Let me mention that the first condition $(K1)$ of Kozlov's theorem
is not required in theorem \ref{genus}.

Our next results apply for the case of Riemannian 2-torus
$\mathbb{T}^2=\mathbb{R}^2/\mathbb{Z}^2$.
\begin{theorem}
\label{torus}
 Let $\Sigma=\mathbb{T}^2$. Let $F$ be a \emph{non-constant} integral of
the geodesic flow satisfying condition $\aleph$. Then there exists
an invariant 2-torus $L$ of the geodesic flow lying in the regular
level of $F$ which is a smooth section of $T_1\mathbb{T}^2$.
\end{theorem}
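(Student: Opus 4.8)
The plan is to realize the required torus as a regular level component of $F$ that is \emph{filled by globally minimal geodesics}, and then to reduce the section property to the exclusion of critical points of the projection to the base. I would begin with the measure-theoretic setup: since $F$ is a non-constant function of class $C^3$ on the three-manifold $T_1\mathbb{T}^2$, the Morse--Sard theorem (the reason the $C^3$ hypothesis is imposed) shows that the critical values of $F$ form a set of measure zero, so regular values are of full measure in the image of $F$. For a regular value $c$, each connected component $L$ of $\{F=c\}$ is, by the Liouville--Arnold theorem, a smooth invariant $2$-torus carrying a linear flow, and it is Lagrangian in $T^*\mathbb{T}^2$ as a joint regular level of the involutive pair $(H,F)$. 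The goal is to find one such $L$ for which $\pi|_L\colon L\to\mathbb{T}^2$ is a diffeomorphism.

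Next I would use the theory of minimal geodesics (the first ingredient). For each irrational rotation vector $\rho$ there is a nonempty closed minimal invariant set $M_\rho\subset T_1\mathbb{T}^2$ formed by globally minimal geodesics of asymptotic direction $\rho$; by continuity and invariance $F$ is constant on $M_\rho$, say $F\equiv c(\rho)$. I would first observe that $c(\rho)$ cannot be independent of $\rho$: if it equalled a single value $c_0$ for a dense set of directions, then $\{F=c_0\}$ would contain minimizers in a dense set of directions through a dense set of base points, making $\pi^{-1}(x)\cap\{F=c_0\}$ uncountable yet not the whole fibre for a dense set of $x$, contradicting condition $\aleph$ unless $F$ is constant. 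Hence, using the monotone dependence of the rotation vector along the nested family of invariant tori supplied by the second ingredient, the values $c(\rho)$ fill an interval, and I may select an irrational-slope $\rho$ with $c(\rho)$ a regular value. Writing $L$ for the component of $\{F=c(\rho)\}$ containing $M_\rho$: since $\rho$ has irrational slope no orbit of $M_\rho$ is periodic, so the linear flow on the torus $L$ is minimal, and as $M_\rho$ is a nonempty closed invariant subset this forces $M_\rho=L$. Thus every orbit on $L$ is a globally minimal geodesic.

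It then remains to prove that $\pi|_L$ is a diffeomorphism, and here the problem reduces to a single point. Lifting to $\mathbb{R}^2$, two distinct minimizers with the same irrational rotation vector never cross, so there is at most one geodesic of $M_\rho$ through each base point; hence $\pi|_L$ is injective. If in addition $\pi|_L$ has no critical points it is an immersion between closed surfaces, therefore a covering, and being injective it is a one-sheeted covering, i.e. a diffeomorphism, and $L$ is the desired smooth section. The one remaining obstruction is thus the exclusion of critical points of $\pi|_L$: at such a point $T_pL$ would contain the vertical fibre direction, so $L$ would be tangent to a fibre. This is precisely what the properties of projections of Lagrangian tori from \cite{BP2}, \cite{B2}, \cite{P} are designed to control, reinforced by condition $\aleph$, which governs how fibres can meet the components of a level set. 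Ruling out such folds on a torus swept out by minimizers is the main and most delicate step, whereas injectivity, regularity and the torus topology come essentially for free from the construction above.
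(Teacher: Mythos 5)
Your route differs genuinely from the paper's, which argues by dichotomy: either some regular torus is incompressible, in which case the generalized Birkhoff theorem \ref{birkhoff} immediately makes it a smooth section, or all regular tori are compressible, in which case the non-trapping theorem \ref{I(L)} confines all minimal rays to the connected compact invariant set $Z$ of Lemma \ref{Z}, Morse--Sard forces $F$ to take a single critical value there, and condition $\aleph$ forces $F$ to be constant. Against that, two of your steps have real gaps. First, your use of condition $\aleph$ to show $c(\rho)$ is non-constant does not match the hypothesis of $\aleph$: the condition only restricts the intersection of a fibre with a \emph{single connected component} of a \emph{critical} level, and you verify neither that $c_0$ is a critical value nor that the uncountably many fibre points you produce lie in one component. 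Worse, those points are not where you put them: for irrational $\rho$ the set $M_\rho$ of bi-infinite minimizers projects to a closed, possibly nowhere dense, subset of $\mathbb{T}^2$ (Denjoy-type laminations), so there need not be even one global minimizer of direction $\rho$ through a given base point, let alone uncountably many directions through a fixed $x$. What does pass through every $x$ is a ray in every direction (Theorem \ref{rays}); since $F$ is invariant and a ray accumulates on minimizers of its direction, its initial vector does lie in $\{F=c(\rho)\}$ --- but then you still need the connectedness and Morse--Sard bookkeeping, which is precisely the role of $Z$ and Lemma \ref{Z} and which your argument omits.

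Second, you explicitly defer ``the main and most delicate step,'' the exclusion of critical points of $\pi|_L$, to the references; as written this leaves the theorem unproved. The paper closes this step by first establishing that $L$ is incompressible and then quoting Theorem \ref{birkhoff}. Alternatively, on your torus filled by minimizers one can argue via Theorem \ref{caustics}: the singular points of $\pi|_L$ form non-null-homotopic closed curves crossed transversally by the flow, each crossing increases the Morse index so a minimal orbit cannot cross one, and since every orbit on $L$ is dense (irrational rotation) and minimal, the singular set must be empty; injectivity then follows from non-crossing of minimizers as you say. Either way this step has to be carried out, not merely located. The remaining ingredients of your construction (continuity of $c(\rho)$, which presupposes that $F$ is constant on $M_\rho$, and the identification $M_\rho=L$ via minimality of the Liouville--Arnold flow) are plausible but also need justification beyond what you give.
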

In fact we can specify in the following way:
\begin{corollary}
\label{rational}
 The torus $L$ in the theorem can be chosen
in such a way that all the orbits of the flow on $L$ project to
minimal closed geodesics of the same homotopy type.
\end{corollary}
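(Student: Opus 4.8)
The plan is to extract the extra information from the whole family of invariant sections underlying Theorem~\ref{torus}, rather than from the single torus $L$. First I would record the structure of such a section. Being a regular level of $F$, the torus $L$ is a Liouville torus, hence Lagrangian; as a smooth section of the unit (co)tangent bundle it is therefore the graph of a closed $1$-form $\eta$ with $|\eta|_g\equiv 1$, this being exactly the Hamilton--Jacobi equation on the energy level $\{H=1/2\}$. The projections to $\mathbb{T}^2$ of the orbits lying on $L$ are then precisely the integral curves of the metric-dual field $\eta^\sharp$.

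Minimality of these projected geodesics comes for free from the calibration principle. For any arc $\gamma$ one has $\mathrm{length}(\gamma)\ge\int_\gamma\eta$, since $|\eta|\equiv 1$, with equality exactly when $\dot\gamma\parallel\eta^\sharp$; and because $\eta$ is closed, $\int_\gamma\eta$ depends only on the endpoints of $\gamma$ and its homology class. Hence the leaves of the $\eta^\sharp$-foliation, i.e.\ the projected orbits, minimize length on the universal cover, so they are minimal geodesics. This reasoning applies verbatim to \emph{every} invariant section produced by the theorem, so the only point still to be settled is to select a section whose leaves are all closed and of a single homotopy type.

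For that I would invoke the fact that $L$ lies in a one-parameter family $L_c$ of invariant sections foliating a region of $T_1\mathbb{T}^2$, each $L_c$ being the graph of a unit closed form $\eta_c$ and carrying, by Liouville--Arnold, a linear flow with a well-defined rotation vector $\rho(c)\in H_1(\mathbb{T}^2;\mathbb{R})$. The leaves on $L_c$ are closed and of one homotopy type exactly when $\rho(c)$ has rational direction, in which case, the projection $\pi|_{L_c}$ being a diffeomorphism onto $\Sigma$, this homotopy type transfers to the geodesics on $\mathbb{T}^2$. Realising the geodesic flow on a cross-section as a monotone twist map turns the $L_c$ into invariant Birkhoff graphs, and the twist condition forces $\rho(c)$ to vary strictly monotonically with $c$; equivalently the classes $[\eta_c]$ are pairwise distinct and lie on the convex curve $\{\alpha=1/2\}$ of Mather's $\alpha$-function, so they sweep a non-degenerate arc whose directions cover an open set. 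Either description yields a value $c^\ast$ at which $\rho(c^\ast)$ is rational; the corresponding regular section $L^\ast=L_{c^\ast}$ is then the desired torus, since its linear flow has all orbits periodic, they project to minimal geodesics by the calibration step, and they share one homotopy type.

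I expect the main obstacle to be exactly the non-constancy of the rotation vector across the family, that is, excluding the possibility that all these disjoint unit Lagrangian graphs carry one and the same (irrational) rotation direction. This is where the monotone-twist / Birkhoff-graph structure must be used carefully, or, equivalently, the strict convexity of Mather's $\alpha$-function together with the uniqueness of the unit graph in a prescribed cohomology class. By contrast, the calibration argument delivering minimality and the passage from a rational rotation number to closed geodesics of a fixed type are comparatively routine.
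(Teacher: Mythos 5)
Your proposal is correct and, at its core, follows the same route as the paper: the paper also perturbs $L$ through nearby regular levels of $F$, observes that the rotation number cannot stay constant because two distinct minimal geodesics of the same irrational rotation number cannot intersect (whereas disjoint invariant graphs of minimal orbits would force such an intersection at any base point where they differ), and then uses continuity of the rotation number to reach a rational value. The only difference is packaging: the paper gets minimality of the projected orbits directly from Theorem~\ref{birkhoff} and settles the crux with this elementary non-crossing property, so the appeal to Mather's $\alpha$-function (whose ``strict convexity'' is anyway not quite the right statement --- one would need differentiability of $\beta$ at irrational classes) is an unnecessary detour.
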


One can deepen the theorem \ref{torus} in the following way, which
can be interpreted as non-existence of the "instability" zones:

\begin{theorem}
\label{zones} Let $F$ be a non-constant integral of the geodesic
flow satisfying condition $\aleph$. Let  $N$ be a domain in
$T_1\mathbb{T}^2$ bounded by two disjoint sections $L_1,L_2$ which
are singular invariant torii of the geodesic flow. Then there exists
a regular torus $L$ lying inside $N$ which is a section invariant
under the flow.
\end{theorem}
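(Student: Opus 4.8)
The strategy is to carry out the construction behind Theorem \ref{torus} and Corollary \ref{rational} \emph{inside} the region $N$, exploiting that $N$ is flow-invariant: its boundary $L_1\cup L_2$ is a union of invariant tori, so the geodesic flow preserves $\overline N$.

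I would begin by ruling out that $F$ is constant on $N$. If it were, then $N$ would lie in a single connected component of a critical level $\{F=c\}$, and for every $x$ in the open set $\pi(N)\subset\mathbb{T}^2$ the fibre $\pi^{-1}(x)$ would meet that component along the nondegenerate arc $\pi^{-1}(x)\cap N$; for $x$ ranging in a small open subset this arc is uncountable and a proper subset of the fibre, so condition $\aleph$ would fail on an open set. Since the good points of $\aleph$ are dense, this is impossible, whence $F|_N$ is non-constant. The Morse--Sard theorem, applicable because $F\in C^3$, then shows that regular values of $F|_N$ are dense; choosing a regular value $c$ not attained on $L_1\cup L_2$, a compact invariant component of $\{F=c\}$ meeting the interior of $N$ is, by Liouville--Arnold, a regular torus contained in the interior of $N$. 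In particular, regular tori exist inside $N$.

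Next I would control the admissible homotopy directions. The boundary sections $L_1,L_2$ are invariant Lagrangian graphs, hence calibrated, so by the theory of minimal geodesics (the first ingredient of the method) their orbits project to globally minimal geodesics with well-defined rotation directions $\rho_1,\rho_2$ in $H_1(\mathbb{T}^2,\mathbb{R})$. Since two disjoint invariant sections are linearly ordered in the fibre and the geodesic flow enjoys a twist property, I expect $\rho_1\neq\rho_2$ and the rotation direction to vary monotonically with the transverse level parameter across $N$. Choosing a rational class $h$ strictly between $\rho_1$ and $\rho_2$, this monotonicity together with the intermediate-value argument of the previous step should yield a regular value $c$ whose torus $T\subset N$ carries closed geodesics of class $h$.

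The final step is to promote $T$ to a section, and this is where I expect the main obstacle. The closed geodesics on $T$ lie between the minimizing foliations of the nearby directions furnished by $L_1$ and $L_2$, which I would use to force them to be minimal in the class $h$; minimal closed geodesics of a fixed rational class that foliate $\mathbb{T}^2$ form a Lipschitz graph by Hedlund's theorem, and the smoothness of the regular torus $T$ upgrades this to a smooth section. The hard point is excluding the ``instability zone'' alternative, in which the minimal geodesics of class $h$ do not foliate but fill a Cantor--Mather set joined by separatrices; this is precisely the configuration condition $\aleph$ is designed to forbid. Combining $\aleph$ with the projection theory of Lagrangian tori (the second ingredient), a fold of the projection $\pi|_T$, or a Cantor gap structure, would create a dense family of fibres meeting a critical-level component in an uncountable proper set, contradicting $\aleph$. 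Excluding it leaves $T$ as the sought regular invariant section inside $N$.
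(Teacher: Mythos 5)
Your proposal correctly produces \emph{some} regular invariant torus in the interior of $N$ (via Morse--Sard and the fact that a regular value avoiding $F(L_1\cup L_2)$ gives level components trapped in $N$), and it correctly identifies that the rotation numbers of $L_1,L_2$ must differ. But the decisive step --- upgrading a regular torus to a \emph{section} --- is exactly where your argument breaks, and the fix you sketch does not work. A regular torus $T\subset N$ found by Morse--Sard may perfectly well be compressible (e.g.\ bounding a solid torus around an elliptic closed geodesic); in that case it carries no rotation direction in $H_1(\mathbb{T}^2;\mathbb{R})$ at all, so the ``monotone variation of rotation direction across $N$'' and the intermediate-value selection of a rational class $h$ have no foundation. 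Worse, your proposed contradiction --- that a fold of $\pi|_T$ or a Cantor gap would violate condition $\aleph$ --- is incorrect, because $\aleph$ constrains only intersections of fibres with components of \emph{critical} levels, while $T$ lies in a regular level; folds of projections of regular compressible tori are entirely compatible with $\aleph$.

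The missing ingredients are the paper's non-trapping theorem and the connectedness argument from the proof of Theorem \ref{genus}. The paper first proves a lemma: since $L_1,L_2$ are disjoint invariant graphs, their (minimal) orbits have distinct rotation numbers, and by the ordering of minimal geodesics every ray of intermediate slope through any point stays inside $N$ --- uncountably many per fibre. Then one assumes all regular tori in $N$ are compressible and forms $Z=N-\bigcup_{L}I(L)$: by Theorem \ref{I(L)} no minimal ray can lie on a chain-recurrent compressible torus or in its interior component, so $Z$ still contains all those rays; $Z$ is a compact connected invariant set contained in the union of critical levels, hence by Morse--Sard $F$ takes a single critical value on it; condition $\aleph$ then forces whole fibres into that critical component over a dense set of base points, making $F$ constant --- a contradiction. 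Hence some regular torus in $N$ is incompressible, and Theorem \ref{birkhoff} makes it a smooth section. Your write-up contains the rotation-number observation but never invokes the compressible/incompressible dichotomy or the non-trapping of minimal rays, which is the actual engine of the proof; without it the argument does not close.
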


Notice that the claim of this theorem obviously holds true if one of
the boundary torii $L_1,L_2$ is regular. Because one can push it
inside $N$ by a flow $v^t$ of the vector field $v=\nabla F /|\nabla
F|^2$ (see proof of theorem \ref {chains} below). Moreover, assuming
theorem \ref{torus} one can start the following process. Take a
regular torus $L$ and move it with the flow $v^t$ inside
$T_1\mathbb{T}^2$ as long as possible in a positive and negative
direction. This cannot be continued when the torii become singular.
Then, by theorem \ref{zones} between any two singular torii one can
take a new regular torus and to flow it again, and so on. The
process terminates when one gets singular torii touching one
another. To be more precise, we introduce the following
\begin{definition} By a separatrix chain we mean a closed subset
$X \subset T_1\mathbb{T}^2$ bounded by two different singular torii
$L_+,L_-$ such that both of them are Lipshits sections invariant
under the geodesic flow such that the intersection $L_{+}\cap L_-$
equals the union of all periodic minimizing trajectories of a common
rational rotation number. No other invariant sections are allowed to
pass inside $X$.
\end{definition}

\begin{theorem}
\label{chains} Let $F$ be a non-constant integral of the geodesic
flow of the 2-torus satisfying condition $\aleph$. There are at most
countably many separatrix chains. Through any point in the
complement of their union passes a unique invariant section which is
either regular or singular. In case it is singular, it is a limit
from both sides of regular invariant sections. The complement to the
union of separatrix chains is $C^0$ foliated by invariant sections.
\end {theorem}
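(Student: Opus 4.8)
The plan is to assemble Theorem \ref{chains} from the three preceding results (Theorems \ref{torus}, \ref{zones}, and the machinery behind Corollary \ref{rational}) together with the monotone flowing process already described in the paragraph following Theorem \ref{zones}. I would first make precise the exhaustion procedure. Starting from a regular invariant section $L$ (which exists by Theorem \ref{torus}), I flow it by $v^t$, $v=\nabla F/|\nabla F|^2$, in the positive and negative directions. Since $v$ increases $F$ at unit rate, the image sections sweep out an increasing family of level sections and the process stops exactly when the section becomes singular. The key structural observation is that $T_1\mathbb{T}^2$ is compact and the sections are graphs (sections of the circle bundle), so the space of invariant sections is naturally ordered by their $F$-values along each fibre; I would exploit this order to organize the regular sections into maximal monotone families bounded by singular sections.

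Next I would define the separatrix chains as the ``gaps'' in this foliation. Between any two singular sections that bound a region $N$, Theorem \ref{zones} forbids the region from being a genuine gap unless no regular section fits inside; Corollary \ref{rational} then forces the two bounding singular torii to meet exactly along periodic minimizing geodesics of a common rational rotation number, which is precisely the definition of a separatrix chain. The countability of the chains should follow from condition $\aleph$ combined with the Morse--Sard theorem: each chain is associated to a rational rotation number and to a critical value of $F$, and by (the Morse--Sard input promised in the regularity discussion) $F$ has measure-zero critical values, while the rationals are countable; I would argue that distinct chains occupy disjoint open slabs in the natural ordering, so they are at most countable.

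To finish the foliation statement I would verify three things for a point $z$ in the complement of the union of chains. First, existence of an invariant section through $z$: flow the known regular sections toward the $F$-level of $z$ and extract a limit; the limit is a Lipschitz section by the projection theory for Lagrangian torii (the second ingredient cited in the introduction), and it is invariant because the flow and the limit commute. Second, uniqueness: two distinct invariant sections through $z$ would bound a nontrivial region, and if that region contained no further section it would be a chain, contradicting $z$ lying in the complement; if it did contain one, the sections would have to agree at $z$ by the ordering. Third, the claim that a singular section in the complement is a two-sided limit of regular sections: this is exactly the statement that it is not a boundary of a chain, so regular sections accumulate on it from both sides, which I would get by rerunning the flowing process and invoking Theorem \ref{zones} to produce regular sections arbitrarily close on each side.

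The main obstacle I expect is the regularity and limiting argument in the third paragraph: showing that the $C^0$ (indeed Lipschitz) limits of the flowed regular sections are genuinely invariant sections rather than degenerating, and that the resulting family fits together into a $C^0$ foliation without overlaps or gaps outside the chains. This requires careful use of the a priori Lipschitz bounds coming from the graph (projection) property of invariant Lagrangian torii over $\mathbb{T}^2$, and a compactness argument to pass to limits; controlling the behaviour exactly at the singular torii, where $DF$ vanishes and the section may fail to be smooth, is the delicate point, and is where condition $\aleph$ and Corollary \ref{rational} must be used to rule out pathological accumulation of sections.
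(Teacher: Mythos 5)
Your proposal follows essentially the same route as the paper: flow a regular invariant section (which exists by Theorem \ref{torus}) with $v=\nabla F/|\nabla F|^2$, use the uniform $K$-Lipschitz bound of Theorem \ref{birkhoff} together with Arzel\`a--Ascoli to extract singular limit sections, take the inf/sup of regular sections on either side of a point outside the open set swept by regular sections, and invoke Theorem \ref{zones} to conclude that the two bounding singular sections cannot be disjoint --- so they either coincide (giving the singular leaf which is a two-sided limit of regular ones) or intersect along periodic minimizers of a common rational rotation number (giving a separatrix chain). The countability argument via disjoint slabs is also fine.

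Two details in your write-up would not survive as stated, though both are repairable. First, the invariant sections cannot be globally ordered by their $F$-values along the fibres: the two boundary torii $L_\pm$ of a chain carry equal values of $F$ (this is precisely the observation behind the corollary on real-analytic integrals, where the fibre derivative of $F$ must vanish between them), so $F$ restricted to a fibre is not monotone across the family of sections and does not separate them. The paper instead fixes one regular section $L_0$ and an orientation of the fibres, and orders all other sections by their position on the fibre circle relative to $L_0$; your uniqueness step (``the sections would have to agree at $z$ by the ordering'') needs this fibre order, not the $F$-order. Second, the identification of $L_+\cap L_-$ as the set of periodic minimizers of a common rational rotation number is not a consequence of Corollary \ref{rational}, which only asserts the existence of one regular torus carrying closed minimal geodesics. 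What is actually used is the ordering property of minimal geodesics on the plane: a nonempty intersection of two invariant Lipschitz graphs contains common orbits, hence forces a common rotation number, and since two distinct minimal geodesics of the same irrational rotation number cannot cross, that number must be rational and the intersection consists of the periodic minimizers of that type.
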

For example, there are no separatrix chains at all if the metric is
flat (and in fact only in this case, by a theorem of E.Hopf
\cite{Hopf}). There are precisely two chains for rotationally
symmetric Riemannian metrics and four of them for Liouville metrics.
The number of these separatrix chains (it is always even, due to the
symmetry of the metric) corresponds to a number of non-smooth points
for the ball of stable norm on $H_1(\mathbb{T}^2;\mathbb{R})$. This
connection with the stable norm follows from Bangert's paper
\cite{Ban2} (see also paper by Mather \cite{Mat}).
\begin{corollary}
If the integral $F$ is assumed to be real-analytic in momenta then
there are at most finitely many of separatrix chains.
\end{corollary}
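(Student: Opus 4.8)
The plan is to bound the number of chains by the number of critical points of the restriction of $F$ to a single, suitably chosen circle fibre: real-analyticity in momenta makes that number finite, and I will charge each chain to a distinct such critical point.

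First I would invoke the structure already available. By the Example, a function real-analytic in momenta satisfies condition $\aleph$, so Theorem \ref{chains} gives at most countably many chains $X_1,X_2,\dots$, pairwise disjoint, whose complement is $C^0$-foliated by invariant sections; in particular every invariant section is a graph over $\mathbb{T}^2$ and meets each fibre $\pi^{-1}(x)$ in exactly one point. I would then fix a base point $x_0$ for which $f:=F(x_0,\cdot)$ is non-constant on the circle $\pi^{-1}(x_0)$. Such $x_0$ fills a nonempty open set: if $F(x,\cdot)$ were constant for every $x$, then $F$ would be momentum-independent and constant along every geodesic, forcing $\nabla F\equiv 0$ and contradicting the non-constancy of $F$. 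Being real-analytic and non-constant on a compact circle, $f$ has $f'$ with isolated zeros, hence only finitely many critical points.

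Next I would attach to each chain a critical point of $f$. The bounding singular tori $L_i^\pm$ of $X_i$ each lie in one level of $F$, and since $L_i^+\cap L_i^-\neq\emptyset$ (the common periodic minimizers) they lie in one and the same level $\{F=c_i\}$. Crossing the fibre at $p_i^\pm:=L_i^\pm\cap\pi^{-1}(x_0)$ gives $f(p_i^-)=f(p_i^+)=c_i$, and $X_i$ meets the fibre in the closed arc bounded by $p_i^\pm$. If $p_i^-\neq p_i^+$, then $f$ is non-constant on this arc (an analytic function constant on an arc would be globally constant), so it attains an interior extremum, i.e. an interior critical point; since distinct chains are disjoint, so are these open arcs, and the resulting interior critical points are distinct. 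In the degenerate case $p_i^-=p_i^+=:q$ two distinct sections pass through $(x_0,q)$, which forces $\partial_pF(x_0,q)=0$ (otherwise $\{F=c_i\}$ would be a unique local graph there), again a critical point of $f$; and two distinct chains cannot pinch at the same fibre point, since that point would have to carry two different rotation numbers. In every case the chains map injectively into the finite set of critical points of $f$, which yields finiteness.

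The main obstacle is precisely this injectivity bookkeeping. One must be certain that the two boundary tori of each chain genuinely sit in a common level of $F$ — this is exactly where the defining intersection property $L_i^+\cap L_i^-\neq\emptyset$ is used — and one must handle the degenerate pinching case and verify that two different chains never contribute the same critical point of $f$. Once these points are secured, the finiteness of the zeros of $f'$ for a non-constant real-analytic $f$ on the circle does the rest.
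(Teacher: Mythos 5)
Your argument is essentially the paper's own proof: the paper disposes of this corollary in two sentences, observing that $F$ takes equal values on $L_+$ and $L_-$ (since these bounding tori intersect and hence lie in a common level), so the derivative of $F$ along the fibre must vanish somewhere on the arc between them, and by real-analyticity in momenta this can happen only finitely many times on a fixed fibre. Your write-up is the same Rolle-plus-analyticity argument, merely filling in the bookkeeping (choice of base fibre, disjointness of the arcs, the pinched case) that the paper leaves implicit.
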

This is because the  function $F$ has equal values on $L_+$ and
$L_-$ and therefore the derivative along the fibre of $F$ must
vanish  somewhere in between. By analyticity this can happen only
finitely many times.

The organization of the paper is as follows. In Section
\ref{background} we summarize the needed facts on minimal geodesics
and Lagrangian projections of invariant torii. In Section
\ref{interior} we prove that no minimal rays can be trapped "inside"
compressible invariant tori. Section \ref{proofs} contains the
proofs of main theorems \ref {genus}, \ref {torus}. In Section \ref
{separatrix} theorems \ref {zones} and \ref {chains} are proved. The
last Section \ref{critical} contains facts on critical points of $F$
(not necessarily satisfying condition $\aleph$).


\section*{Acknowledgements}
I am thankful to Leonid Polterovich, with whom we started the study
of Lagrangian torii and Minimal geodesics many years ago. During
these years we discussed the subject many times and I learnt a lot
of things from him.

\section {Minimal geodesics and torii}
\label{background} The proof of our main results relies  on
interplay between theory of minimal geodesics on surfaces and theory
of Lagrangian singularities of projections of invariant torii. Let
me summarize the needed facts in several theorems below.

Let us represent the surface as a quotient $\Sigma=\widetilde{\Sigma
}/\Gamma$ where $\Gamma$ is the fundamental group of the surface
acting on the covering by isometries. Here $\widetilde{\Sigma }$ is
a Euclidian plane $\mathbb{R}^2$ for $p=1$, and the Poincare unite
disc $\mathbb{D}$ for $p>1$. We shall lift the metric $g$ to the
covering and also denote by $g_{0}$ the Euclidian and Hyperbolic
metric on $\widetilde{\Sigma }$ respectively. We use the following
terminology. An isometric image of $
\gamma:(-\infty,+\infty)\rightarrow\widetilde{\Sigma }$ will be
called a minimal geodesic while an isometric image of $
\gamma:[0,+\infty)\rightarrow\widetilde{\Sigma }$ will be called a
ray starting at $\gamma(0)$. Projections of minimal geodesics and
rays from $\widetilde{\Sigma }$ to $\Sigma$ form by definition a
class of minimal geodesics and rays on $\Sigma$. The orbits of
geodesic flow corresponding to minimal geodesics and rays we will
call by minimal orbits. It was proved by Morse that each minimal
geodesic stays in a finite distance from a unique $g_0$-geodesic on
the covering $\widetilde{\Sigma }$ which is called the type of the
minimal geodesic. The type is determined by the slope of the
straight line on the Euclidian plane for $p=1$, and by the end
points of the Hyperbolic geodesic for $p>1$. A minimal geodesic is
called to be of periodic type if the corresponding $g_0$-geodesic
becomes closed being projected to $\Sigma$, and it has a
non-periodic type, in the opposite case. We shall use the following:
\begin{theorem}
\label{rays}
\item
1. For any point $x\in\ \widetilde{\Sigma }$ there are uncountably
many rays starting at $x$. They are parameterized by the slope for
the case $p=1$, and by points of the ideal circle in the case $p>1$.
This means that for the case $p=1$, every such a ray has a given
slope. And in the case $p>1$ such a ray stays on a bounded distance
at infinity from any Hyperbolic geodesic approaching a given point
of the ideal circle.
\item
2. Some of these rays are of periodic type. More precisely, for any
point $x\in \widetilde{\Sigma }$ there exists a ray starting at $x$
asymptotic to a minimal geodesic of a given periodic type (or the
ray itself is a half of periodic minimal geodesic).

\end{theorem}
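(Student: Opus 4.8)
The plan is to construct the desired rays as limits of minimizing geodesic segments and then to identify their types by means of Morse's theorem quoted above. Throughout I use that, since $\Sigma$ is compact, the lifted metric $g$ and the model metric $g_0$ on $\widetilde{\Sigma}$ are uniformly bi-Lipschitz equivalent; hence $g$ is complete and proper, and every $g$-minimizing segment is a $g_0$-quasigeodesic with constants independent of its endpoints. For $p>1$ I will also use that $(\mathbb{D},g_0)$ has constant negative curvature, hence thin triangles, so that by the Morse stability lemma a $g_0$-quasigeodesic stays within a uniform Hausdorff distance of the $g_0$-geodesic joining the same endpoints; for $p=1$ the corresponding statement is the elementary convexity estimate in the flat plane.

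For part 1, fix $x\in\widetilde{\Sigma}$ and a prescribed type: a slope $\theta$ when $p=1$, or an ideal point $\xi$ when $p>1$. Let $z_n\to\infty$ run along the $g_0$-geodesic ray from $x$ of slope $\theta$ (resp. toward $\xi$), and let $\sigma_n$ be the unit-speed $g$-minimizing segment from $x$ to $z_n$, with initial velocity $v_n\in T^1_x\widetilde{\Sigma}$. Since the unit circle $T^1_x\widetilde{\Sigma}$ is compact, a subsequence of the $v_n$ converges to some $v$; let $\gamma$ be the $g$-geodesic with $\gamma(0)=x$ and $\dot\gamma(0)=v$. A diagonal argument shows that $\gamma$ is minimizing on every compact subinterval (each $\sigma_n$ minimizes far beyond any fixed $T$ once $n$ is large), so $\gamma$ is a ray. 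By the quasigeodesic stability above, each $\sigma_n$, and therefore $\gamma$, stays within a uniform distance of the $g_0$-geodesic toward $\theta$ (resp. $\xi$); by Morse's theorem this geodesic is exactly the type of $\gamma$. Distinct slopes (resp. distinct ideal points) give $g_0$-geodesics that diverge, hence rays of different type, and since the circle of slopes (resp. the ideal circle) is uncountable, we obtain uncountably many distinct rays, parameterized as claimed.

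For part 2, a periodic type corresponds to a $g_0$-geodesic whose projection to $\Sigma$ is closed, i.e. to the axis of a primitive hyperbolic element $\phi\in\Gamma$ when $p>1$ (resp. a rational-slope line invariant under a primitive lattice translation when $p=1$). In the corresponding free homotopy class there is a $g$-minimizing closed geodesic, whose lift $\widetilde{\sigma}$ is a $\phi$-invariant $g$-minimal geodesic of the given periodic type. If $x\in\widetilde{\sigma}$, the forward half of $\widetilde{\sigma}$ is already the desired ray; otherwise I apply the construction of part 1 with $z_n=\widetilde{\sigma}(t_n)$, $t_n\to+\infty$. The resulting ray $\gamma$ stays within bounded distance of $\widetilde{\sigma}$, and since both share the ideal endpoint of the axis, hyperbolicity (resp. the flat estimate) forces $\gamma$ to be asymptotic to $\widetilde{\sigma}$, as required.

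The main obstacle is the uniform geometric control underlying every step: one must know that the minimizing segments $\sigma_n$ do not drift away from the chosen $g_0$-geodesic as the endpoint recedes, uniformly in $n$. This is precisely where the compactness-derived bi-Lipschitz bound (turning $g$-minimizers into $g_0$-quasigeodesics) and the negative curvature of $g_0$ for $p>1$ (thin triangles, Morse lemma) are essential; for $p=1$ they are replaced by convexity in the flat plane. A secondary technical point is upgrading \emph{bounded distance} to genuine asymptoticity in part 2, which again relies on hyperbolicity together with the $\phi$-invariance and minimality of $\widetilde{\sigma}$.
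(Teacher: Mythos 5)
Your argument for $p>1$ is sound and takes a genuinely different route from the paper: instead of your appeal to the Morse stability lemma for quasigeodesics in the hyperbolic plane, the paper chooses two hyperbolic geodesics $l_1,l_2$ sharing the ideal endpoint $y$ with $x$ far inside the strip between them, produces two non-intersecting minimal $g$-geodesics of those types, and builds the ray inside the resulting strip. Your quasigeodesic argument is a legitimate (and arguably cleaner) substitute, and it correctly delivers only ``bounded distance at infinity'' rather than asymptoticity, matching the statement. For part 2 the paper simply cites Morse; your sketch is broadly acceptable, except that ``bounded distance plus hyperbolicity forces asymptoticity'' is not the right mechanism for the metric $g$ (two $g$-geodesics at bounded distance need not converge; asymptoticity of a minimal ray of periodic type to a periodic minimizer comes from Morse's ordering and non-crossing theory, and can fail precisely when the ray is itself half of a periodic minimal geodesic, which is why the statement carries that parenthetical).

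The genuine gap is the case $p=1$. Your whole construction rests on upgrading ``$g$-minimizers are uniform $g_0$-quasigeodesics'' to ``they stay within uniformly bounded distance of the $g_0$-geodesic joining their endpoints.'' That upgrade is the Morse stability lemma, and it is \emph{false} in the Euclidean plane: quasigeodesics in $\mathbb{R}^2$ are not stable (for instance $t\mapsto (t,\sqrt{t})$ is a quasigeodesic ray lying within bounded distance of no straight line), and the ``elementary convexity estimate'' you invoke only confines a minimizing segment to an ellipse with foci at its endpoints, i.e.\ gives drift proportional to the distance between the endpoints, not bounded drift. Consequently you have not shown that the limit ray has slope $\theta$, nor that distinct slopes yield distinct rays. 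What is actually needed is Hedlund's theorem that minimal geodesics of a $\mathbb{Z}^2$-periodic metric stay within a \emph{uniform} distance of straight lines; its proof uses the periodicity (a minimizer cannot cross its integer translates more than once, and minimizers of a fixed rotation number are ordered), not a bi-Lipschitz comparison with the flat metric alone. This is what the paper means by calling the torus case ``immediate (see also \cite{BP1})'': immediate from the Morse--Hedlund theory. To repair your proof, either invoke Hedlund's theorem directly, or reproduce the strip construction (periodic minimizers of each rational slope bounding strips containing $x$, followed by a limiting argument for irrational slopes).
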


The second statement of the theorem is proved explicitly in the
paper by M.Morse. The first is immediate for the case of the torus
(see also \cite {BP1}) and for higher genus it can be easily deduced
from the theory of minimal geodesics in the following way. Given a
point $x\in\widetilde{\Sigma}$ and a point $y$ on the ideal
boundary, choose hyperbolic geodesics $l_1,l_2$ with one their end
at $y$, such that $x$ is contained in the strip between them and
lies sufficiently far away from them. Then one can construct by a
limiting argument, two minimal $g$-geodesics $\gamma_1,\gamma_2$ of
the type of $l_1,l_2$ respectively such that they do not intersect
and contain $x$ in the strip between them. Having such a strip, one
constructs a ray lying inside in a standard way. Notice that unlike
the torus case where the ray can be chosen to be asymptotic to a
minimal geodesic of given slope, the ray constructed above for
higher genus stays on a bounded distance at infinity from
$\gamma_1,\gamma_2$. The following fact, important for our purposes,
was proved by Morse \cite{Morse}, for $p>1$, and by Hedlund in
\cite{Hed}, for $p=1$.  Given any two minimal periodic geodesics of
the same periodic type on $\widetilde{\Sigma}$ such that there are
no other minimal periodic geodesics in the strip between them, there
always exist two heteroclinic connecting geodesics in the strip
between them (and therefore also all their translates). It follows
from this fact that the set of minimal geodesics of a given periodic
type on $\widetilde{\Sigma}$ has separation zero, i.e. for any
$\epsilon>0$ there exists an $\epsilon$-chain connecting them. We
can summarize:
\begin{theorem}
\label{connectedness} Fix a periodic type of minimal geodesics on
$\widetilde{\Sigma}$ and denote  by $\widetilde{M}\subset
T_1\widetilde{\Sigma}$ be a set of all unite tangent vectors to
minimal geodesics of the fixed periodic type, let $M$ be the
projection of $\widetilde{M}$ to $T_1\Sigma$. Then
\item
1. The set $\widetilde{M}$ has separation zero, $M$ is a closed
connected set.
\item
2. For any point $x\in\widetilde{\Sigma}$ there exists a unite
tangent vector $v$ at $x$ such that the trajectory of the geodesic
flow $g^t(x,v)$ is approaching $\widetilde{M}$ as
$t\rightarrow+\infty$.
\end{theorem}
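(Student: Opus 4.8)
The plan is to read Theorem \ref{connectedness} as a packaging of the heteroclinic-connection fact recalled just above together with two standard pieces of the theory of minimal geodesics. For Part 1, I would first establish that $\widetilde{M}$ is closed. A uniform limit of minimal geodesics is again minimal, and Morse's theory provides a uniform constant $D$, depending only on the metric $g$, such that every minimal geodesic of the fixed periodic type stays within $g_0$-distance $D$ of its defining closed $g_0$-geodesic. Hence if a sequence of minimal geodesics of the fixed type converges (uniformly on compact sets, equivalently in $T_1\widetilde{\Sigma}$ through their initial unit vectors), the limit again shadows the same closed $g_0$-geodesic and is therefore of the same type. Thus $\widetilde{M}$ is closed; it is $\Gamma$-invariant by construction, so its projection $M$ is a closed subset of the compact manifold $T_1\Sigma$, hence compact.

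The separation-zero statement is exactly the fact recalled before the theorem: given two minimal periodic geodesics of the fixed type bounding a strip with no minimal periodic geodesic inside, there are heteroclinic connections in the strip, and iterating this produces, for every $\epsilon>0$, an $\epsilon$-chain of minimal geodesics of the type joining any two of them. To deduce connectedness of $M$, I would project these $\epsilon$-chains to $T_1\Sigma$: the set $M$ is then $\epsilon$-chain connected for every $\epsilon>0$. Since $M$ is compact, the elementary topological fact that a compact metric space which is $\epsilon$-chain connected for all $\epsilon>0$ is connected finishes Part 1.

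For Part 2, I would invoke Theorem \ref{rays}.2: for the given point $x\in\widetilde{\Sigma}$ there is a ray $r$ starting at $x$ which is asymptotic to a minimal geodesic $\gamma$ of the fixed periodic type (or is itself a half of such a periodic minimal geodesic). Taking $v=\dot r(0)$, the geodesic-flow trajectory is $g^t(x,v)=(r(t),\dot r(t))$. By construction $d\big(r(t),\gamma\big)\to 0$ as $t\to+\infty$, and since the tangent vectors of $\gamma$ all lie in $\widetilde{M}$, it remains to see that $\dot r(t)$ aligns with the direction of $\gamma$ in the limit; then $g^t(x,v)$ approaches $\widetilde{M}$, as claimed.

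The routine parts---closedness under limits and the compactness-to-connectedness step---are standard, so the one point deserving care is the last: upgrading ``asymptotic in the base $\widetilde{\Sigma}$'' to ``approaching $\widetilde{M}$ in the unit tangent bundle $T_1\widetilde{\Sigma}$'', i.e. controlling the directions $\dot r(t)$ and not merely the footpoints $r(t)$. I expect this to be the main (though mild) obstacle; it should follow from the stability of minimal geodesics, since two nearby minimal geodesics that stay close over a long geodesic time must have nearly parallel tangent vectors, forcing $\dot r(t)$ to converge to the corresponding unit tangent vector of $\gamma$.
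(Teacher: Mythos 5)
Your proposal is correct and follows essentially the same route as the paper, which presents Theorem \ref{connectedness} as a summary of the Morse--Hedlund heteroclinic-connection fact (giving separation zero and hence connectedness of the closed set $M$) together with Theorem \ref{rays}.2 (giving the asymptotic ray), exactly the two ingredients you assemble. The one step you flag as delicate is actually easier than you suggest: since $\widetilde{M}$ contains the unit tangent vectors of \emph{all} minimal geodesics of the fixed type and is closed under $C^1$ limits, any limit point of $(r(t),\dot r(t))$ is tangent to a minimal geodesic shadowing the same closed $g_0$-geodesic and therefore already lies in $\widetilde{M}$, so there is no need to align $\dot r(t)$ with the particular geodesic $\gamma$.
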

Let me summarize now the needed facts on Lagrangian torii invariant
under the geodesic flows. These results started from \cite{BP1},
\cite{BP2} generalizing Birkhoff's first and second theorems from
area preserving twist maps to the case of geodesic flows. Later they
were generalized further in many other directions. We refer the
reader to paper \cite{BP3} for various generalizations and
references. I will need also the results from \cite{B2} where
compressible invariant torii were studied. I will remind first the
following:
\begin{theorem}
\label{caustics} Let $L\subset T_1\Sigma$ be an invariant torus of
the geodesic flow. Then the set of singular points of the projection
of $\pi|_L$ is a union of finite number simple closed
non-intersecting curves on $L$. They are not null-homotopic on $L$
and the trajectories of the flow intersect them transversally.
\end{theorem}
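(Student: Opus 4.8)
The plan is to realize the singular set as the zero locus of a Jacobian-type function on $L$ and to control that locus through the interplay between the geodesic vector field and the vertical line field, the essential geometric input being the convexity (monotone twist) of the geodesic flow.

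First I would fix the local picture. Let $X$ denote the geodesic vector field on $T_1\Sigma$, i.e. the Reeb field of the canonical contact form $\alpha$, and let $V=\ker d\pi$ be the vertical line field tangent to the circle fibres, generated by the angular field $\partial_\theta$. Since $L$ is invariant, $X$ is tangent to $L$; it is nowhere zero and satisfies $d\pi(X)=v\neq 0$ at every point $(x,v)\in L$. Hence $d(\pi|_L)$ has rank at least one everywhere, and a point $p\in L$ is singular for $\pi|_L$ precisely when the kernel is one-dimensional, that is when $V_p\subset T_pL$. Fixing area forms $dA_L$ on $L$ and $dA_\Sigma$ on $\Sigma$ and writing $(\pi|_L)^*dA_\Sigma=J\,dA_L$, the singular set equals $\{J=0\}$. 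Note also that since $\alpha(X)\neq 0$ while $V_p\subset\ker\alpha$, the line $\ell_p:=T_pL\cap\ker\alpha$ is one-dimensional and transverse to $X$, and $J(p)=0$ is equivalent to $\ell_p=V_p$.

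The main step, and the principal obstacle, is to show that the geodesic flow crosses $\{J=0\}$ transversally, i.e. $X\cdot J\neq 0$ whenever $J=0$. This is where the geometry enters. The linearized flow preserves the symplectic form $d\alpha$ on the contact plane $\xi=\ker\alpha$, and the evolution of the line $\ell_{g^t p}$ inside $\xi$, measured against the vertical line field $V$, is governed by a Riccati (Jacobi) equation. Because the kinetic Hamiltonian $H=\tfrac12 g^{ij}p_ip_j$ is strictly convex in the momenta, this line rotates monotonically through the vertical direction as $p$ moves along the orbit; this is precisely the monotone twist property of geodesic flows underlying the Birkhoff-type results of \cite{BP1}, \cite{BP2}, \cite{B2}. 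Monotone rotation through $V_p$ forces $J$ to change sign with nonzero speed along $X$, giving $X\cdot J\neq 0$ on $\{J=0\}$. I expect establishing this monotonicity uniformly, and independently of the sign of the Gaussian curvature, to be the delicate point.

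Granting $X\cdot J\neq 0$ on the singular set, two consequences follow at once. First, $dJ\neq 0$ there, so $0$ is a regular value and $\{J=0\}$ is a compact one-dimensional submanifold of $L$; a compact $1$-manifold is a finite disjoint union of simple closed curves, and $X$ meets each of them transversally by construction. Second, no such curve can be null-homotopic: a null-homotopic simple closed curve $c$ on the torus $L$ bounds a disk $D\subset L$, and since $X$ is transverse to the connected curve $c$ its normal component has constant sign, so $X$ points uniformly inward or outward along $\partial D$. The Poincar\'e--Hopf theorem then forces $\sum\mathrm{ind}_X=\chi(D)=1$, contradicting that $X$ has no zeros on $D$. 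Hence every component is essential, and the three assertions of the theorem are established.
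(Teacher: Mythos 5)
The paper does not actually prove Theorem \ref{caustics}: it is quoted as a known result, with the proof residing in \cite{BP1}, \cite{BP2}, \cite{B2}. Measured against that source, your reconstruction is essentially the standard argument and is correct: the singular set is where the vertical line $V_p=\ker d\pi$ lies in $T_pL$, equivalently the zero locus of the Jacobian $J$; transversality of the flow to $\{J=0\}$ comes from the convexity of the Hamiltonian in the momenta; and finiteness plus non-contractibility follow from the regular-value and Poincar\'e--Hopf arguments exactly as you give them. The one step you flag as delicate is less delicate than you fear: since $L$ and $\ker\alpha$ are both preserved by the flow, the line $\ell_{g^tp}=T_{g^tp}L\cap\ker\alpha$ is spanned by a nontrivial Jacobi field $(y(t),\dot y(t))$ along the underlying geodesic, and $J(g^tp)$ vanishes exactly when $y(t)=0$; at such a moment $\dot y(t)\neq 0$ because a nontrivial solution of $\ddot y+Ky=0$ cannot vanish to second order, so $X\cdot J\neq 0$ there with no hypothesis on the sign of the curvature $K$. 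The only point worth writing out carefully is that $J(g^tp)$ equals $y(t)$ times a smooth nonvanishing factor (coming from the normalization of the spanning vector of $\ell_p$ against the chosen area forms), so that simplicity of the zero of $y$ really does transfer to $J$.
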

 Recall that an imbedded torus $L$ inside $T_1\Sigma$ is
 called \emph{compressible} if the homomorphism induced by
 inclusion,
 $i_*:\pi_1(L)\rightarrow\pi_1(T_1\Sigma)$
 has a nontrivial kernel, and \emph{incompressible} otherwise.
 Remarkably, if $L$ is an invariant torus of the geodesic flow
 it happens that it is either compressible or projects diffeomorhically.
 Moreover, one can construct the compressing disc explicitly as it
 is
 proved in \cite{B2}.

  Let me summarize in the following way
 the known results on Lagrangian torii from \cite{BP2}, \cite{B2}.
\begin{theorem}
\label{birkhoff}Let $L \subset T_1\Sigma$ be a an invariant torus of
the geodesic flow.
\item
1. For $p>1$ all invariant torii $L\subset T_1\Sigma$ are
compressible. Incompressible torii may exist only for
$\Sigma=\mathbb{T}^2$. In this case, if the dynamics on $L$ is
chain-recurrent (this always holds for regular torii) then $L$ is a
smooth section of the $T_1\mathbb{T}^2$.
\item
2. Moreover, write $\mathbb{T}^2=\mathbb{R}^2/\mathbb{Z}^2$ and fix
a Euclidian structure on $\mathbb{R}^2$ in order to trivialize the
bundle $T_1\mathbb{T}^2=\mathbb{T}^2\times \mathbb{S}^1$. Then the
the projection of $L$ on the $\mathbb{S}^1$-factor is null
homotopic. So, $L$ is a graph of a smooth function $f:
\mathbb{T}^2\rightarrow\mathbb{S}^1$.
\item
3. There exists a constant $K>0$ depending only on the metric $g$
such that any continuous graph invariant under the flow is in fact
$K$-Lipshits. In particular the functions of the previous item have
a-priori bounded gradients: $|\nabla f|\leq K$. In addition,
trajectories on the graph projects to minimal geodesics.
\end{theorem}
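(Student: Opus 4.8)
The plan is to assemble the three assertions from the structural description of the projection furnished by Theorem~\ref{caustics}, following the Birkhoff-type analysis of \cite{BP2} and \cite{B2}. The organizing principle is the dichotomy already announced in the text: the caustic set $\mathcal{C}\subset L$, i.e.\ the singular set of $\pi|_L$, is by Theorem~\ref{caustics} a finite union of pairwise disjoint simple closed curves, none null-homotopic on $L$, crossed transversally by the flow. I would first dispose of Part~1 by treating the two cases $\mathcal{C}=\emptyset$ and $\mathcal{C}\neq\emptyset$ separately.

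If $\mathcal{C}=\emptyset$ then $d(\pi|_L)$ is everywhere nonsingular, so $\pi|_L$ is a local diffeomorphism of the closed surface $L$ onto $\Sigma$, hence (by compactness) a covering map. Comparing Euler characteristics, $\chi(L)=0$ against $\chi(\Sigma)=2-2p$, forces $p=1$: for $p>1$ the caustic set cannot be empty, and therefore (next step) $L$ is compressible. This already yields the first sentence of Part~1 and confines incompressible tori to the case $\Sigma=\mathbb{T}^2$. When $\mathcal{C}\neq\emptyset$, I would produce a compressing disc directly from a caustic curve, as in \cite{B2}: a caustic curve, non-null-homotopic on $L$ but foldable under the projection, bounds a disc in $T_1\Sigma$ whose interior is swept out using the flow transversality, so that $i_*:\pi_1(L)\to\pi_1(T_1\Sigma)$ acquires a nontrivial kernel. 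Thus caustics are present exactly when $L$ is compressible, and their absence yields, in the torus case, a covering $\pi|_L:L\to\mathbb{T}^2$. Chain-recurrence of the flow on $L$ (automatic on regular tori) is then used, via Birkhoff's non-crossing argument, to promote this covering to degree one, i.e.\ a genuine smooth section.

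For Parts~2 and~3 I would work in the trivialisation $T_1\mathbb{T}^2=\mathbb{T}^2\times\mathbb{S}^1$ and regard the incompressible section $L$ as the graph of a map $f:\mathbb{T}^2\to\mathbb{S}^1$. The null-homotopy of the $\mathbb{S}^1$-component (Part~2) and the a-priori Lipschitz bound (Part~3) both flow from the monotone-twist structure of the geodesic flow: the second-order geodesic equation supplies, at each point, a cone in the vertical direction, and invariance of the graph forces its tangent planes to stay inside this cone. The cone aperture is controlled by bounds on $g$ and its derivatives alone, which delivers a constant $K=K(g)$ with $\abs{\nabla f}\leq K$ for every continuous invariant graph, uniformly in $L$. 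Finally, once $L$ is a Lipschitz section, the geodesics filling it lift to a family of pairwise non-crossing geodesics on $\widetilde{\Sigma}$; a standard calibration (Hopf-type) argument then shows each is globally minimizing, giving the last assertion of Part~3.

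The main obstacle is the uniform Lipschitz estimate of Part~3. Birkhoff's original argument and its geodesic-flow incarnation require a delicate comparison of infinitesimally nearby geodesics together with quantitative control of the invariant cone field purely in terms of the metric, so that $K$ does not degenerate as one ranges over all invariant graphs; making the transversality and non-crossing arguments \emph{quantitative}, rather than merely qualitative, is the technical heart. A secondary difficulty is the explicit construction of the compressing disc from a caustic curve, which must be carried out carefully enough to verify that the resulting loop is genuinely nontrivial in $\pi_1(T_1\Sigma)$; I would import this construction from \cite{B2}.
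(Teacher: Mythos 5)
The paper does not actually prove Theorem~\ref{birkhoff}: it is presented as a summary of known results, with Part~1 attributed to the generalized second Birkhoff theorem together with the sections idea of \cite{B2}, Part~3 to the generalized first Birkhoff theorem (the twist condition) together with the theory of fields of extremals, and Part~2 to ``a little topological argument using minimal geodesics of the periodic type.'' Measured against those attributions, your treatment of Parts~1 and~3 is on target: the dichotomy on the caustic set of Theorem~\ref{caustics}, the Euler-characteristic obstruction to an unbranched covering $L\to\Sigma$ when $p>1$, the compressing disc imported from \cite{B2}, the invariant cone field giving a uniform constant $K=K(g)$, and the calibration (field-of-extremals) argument for minimality are exactly the intended ingredients.

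The genuine gap is Part~2. You claim the null-homotopy of the $\mathbb{S}^1$-component ``flows from the monotone-twist structure'' alongside the Lipschitz bound, but this cannot work as stated: a map $f:\mathbb{T}^2\rightarrow\mathbb{S}^1$ of nonzero degree on a generator of $\pi_1(\mathbb{T}^2)$, such as $(x,y)\mapsto e^{2\pi\I x}$, is perfectly Lipschitz, and a bound $|\nabla f|\leq K$ with $K$ depending only on $g$ (hence possibly large) puts no constraint on the winding of $f$ around a nontrivial cycle --- the cone field controls $df$, not the total variation of the fibre coordinate along a loop. What is needed, and what the paper signals, is a dynamical input: the orbits on $L$ project to minimal geodesics, so they cross a fixed closed minimal geodesic of a suitable periodic type coherently (or not at all), which confines the vectors $f(x)$ along such a loop to a proper arc of the fibre circle and forces zero winding. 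Without an argument of this kind Part~2 is unproved in your sketch. A secondary point: your promotion of the covering $\pi|_L:L\to\mathbb{T}^2$ to degree one via ``Birkhoff's non-crossing argument'' presupposes that the orbits on $L$ are already known to be minimal, which in your outline is only established in Part~3; the logical ordering of the steps needs to be made explicit.
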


The first statement of this theorem is a combination of the
so-called generalized second Birkhoff theorem for the case of
geodesic flows together with a general idea on sections for
invariant torii of geodesic flows from \cite{B2}. The third
statement is the so-called generalized first Birkhoff theorem, which
reflects the so called twist condition, together with the property
of field of extremals. The second statement follows from a little
topological argument using minimal geodesics of the periodic type.
Let me remark here that it was proved in \cite{B1} that in the
generalized Birkhoff theorem it is enough to assume for $L$ to be
only continuous, provided there are no periodic orbits on $L$.

\section{Non-trapping of minimal geodesics.}
\label{interior} Now I am in position to define the "interior"
components of compressible torii as follows. Let $L\subset
T_1\Sigma$ be a smooth invariant compressible torus of the geodesic
flow. Then either $L$ bounds a solid torus or $L$ is contained in a
a part of $T_1\Sigma$ homeomorphic to a 3-ball (it is proved with
the help of compressing disc, see \cite{Hatcher}). Define
accordingly the "interior" $I(L)$ to be the interior of the solid
torus bounded by $L$, or the interior of the component lying inside
the 3-ball. Notice that this is a correct definition. Indeed, if it
happened that $L$ bounds a solid torus and some component of the
complement lies in a ball, then it should be a solid torus lying in
the 3-ball. Because otherwise $T_1 \Sigma$ would be simply
connected, which is not the case. Here is one of our crucial
observations:
\begin{theorem}
\label {I(L)} Let $L$ be a smooth compressible invariant torus in
$T_1\Sigma$ such that the dynamics on it is chain-recurrent (for
example this always holds for regular  invariant torii). Then all
geodesic trajectories corresponding to minimal geodesics and rays do
not lie neither on $L$ nor in $I(L)$.
\end{theorem}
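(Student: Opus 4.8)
The plan is to argue by contradiction and to exploit the very definition of $I(L)$, which splits into two cases: either $L$ bounds a solid torus $V$ (so $I(L)=\operatorname{int}V$), or $L$ is null-homotopic in $T_1\Sigma$ and $I(L)$ is the bounded component lying inside a $3$-ball. Suppose some minimal geodesic or ray has its whole orbit contained in $\overline{I(L)}=I(L)\cup L$. I would lift everything to $T_1\widetilde\Sigma$ and use Morse's basic fact that a minimal geodesic stays within bounded distance (in the lifted metric) of the $g_0$-geodesic of its type; hence its lift is a properly embedded line, or half-line, escaping to a definite point, or pair of points, of infinity. Because $L$ is invariant, $L$, $I(L)$ and the exterior are each unions of full orbits and no orbit ever crosses $L$; this invariance will be used repeatedly.

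Ball case. Since $L$ is null-homotopic in $T_1\Sigma$, the restricted covering over $L$ is trivial, so every component of the preimage of $\overline{I(L)}$ in $T_1\widetilde\Sigma$ is compact and projects to a bounded subset of $\widetilde\Sigma$. A minimal geodesic or ray cannot remain in a bounded region, since it escapes to infinity along its type; this contradiction disposes of the ball case at once.

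Solid torus case (the main case). Let $g\in\Gamma$ generate the image $i_*\pi_1(L)\cong\mathbb Z$ and let $\widetilde V$ be the $\langle g\rangle$-invariant component of the preimage of $V$. As $V$ is compact, $\widetilde V$ projects into a bounded neighborhood of the axis $A_g$ of $g$; consequently a trapped minimal orbit lifts into this tube and must have the \emph{periodic type} determined by $A_g$. To reach a contradiction I would bring in the caustic structure of Theorem \ref{caustics}, which is nonempty precisely because the compressible $L$ is not a section: the singular set of $\pi|_L$ is a nonempty union of essential simple closed curves transverse to the flow, projecting to caustic (conjugate-locus) curves in $\Sigma$. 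Along the field of projected orbits a geodesic ceases to minimize once it crosses such a fold, i.e. once its projection acquires a conjugate point. For an orbit lying \emph{on} $L$, chain-recurrence together with transversality forces the forward orbit to cross these essential fold curves, so its projection carries a conjugate point and is neither a minimal geodesic nor a minimal ray; this rules out minimal orbits on $L$.

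Finally, to exclude orbits in the open interior $\operatorname{int}V$, I would use the set $M$ of all minimal geodesics of the periodic type of $A_g$, which by Theorem \ref{connectedness} is closed, connected and invariant. A trapped interior orbit lies in $M$, so $M$ meets $\operatorname{int}V$; being connected it cannot cross the invariant boundary $L$, and by the previous step it cannot meet $L$ either, so $M$ would be confined to $\operatorname{int}V$. This contradicts Theorem \ref{connectedness}(2) (equivalently Theorem \ref{rays}), which produces, from points whose fibres lie outside $\overline V$, orbits that approach $M$ yet remain in the exterior by invariance of $L$ and so can never approach a set inside $\operatorname{int}V$. I expect the two genuinely delicate points to be exactly here: first, the identification of the transverse fold curves of Theorem \ref{caustics} with conjugate points of the projected field, so that crossing a fold really destroys minimality; and second, the exclusion of $M\subset\operatorname{int}V$, where one must guarantee that the approaching orbits of Theorem \ref{connectedness}(2) can be chosen to start genuinely outside $\overline V$ (e.g.\ by verifying $\pi(\overline V)\neq\Sigma$). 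These focal-point and confinement steps, rather than the topological set-up, are where the real work lies.
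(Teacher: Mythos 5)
Your overall strategy coincides with the paper's: the ball case is dismissed by boundedness of lifts, orbits on $L$ are excluded via the caustic curves of Theorem \ref{caustics}, and interior orbits in the solid-torus case are excluded by showing that the whole set $M$ of minimal geodesics of the core's periodic type would be trapped in $I(L)$, contradicting the existence, for every base point, of a ray approaching $M$ (Theorem \ref{connectedness}). The confinement issue you flag at the end is resolved by working on the universal cover, where the relevant component of the preimage of $\overline{I(L)}$ projects into a finite-width neighborhood of the axis $A_g$; a starting point outside that neighborhood always exists, so your worry about whether $\pi(\overline V)=\Sigma$ does not arise there.

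The genuine gap is in your exclusion of minimal orbits lying on $L$ itself. You assert that chain-recurrence together with transversality forces the forward orbit to cross the essential fold curves. That is correct when the orbits on $L$ are not closed (chain recurrence then forces each orbit to cross the closed transversals infinitely often), but it fails as stated when all orbits on $L$ are closed: a closed orbit whose homology class is proportional to that of a fold curve can a priori be disjoint from the fold set, and chain recurrence is vacuous for periodic orbits, so nothing in your argument forces such an orbit to acquire a conjugate point. The paper splits into exactly these two cases and handles the second with an ingredient you are missing: since $L$ is Lagrangian, all closed orbits on $L$ project to geodesics of the same length; the orbits through points of the (nonempty) fold set do cross it and are therefore not minimizing, hence the common length exceeds the minimum in that free homotopy class and no orbit on $L$ is minimal. (Alternatively, one could argue that all crossings of the flow with a fixed fold component have the same sign, so every closed orbit homologous to one meeting that component has nonzero intersection number with it; but some such argument must be supplied.) A second, smaller omission: a trapped ray is not itself an element of $M$; the paper first passes to a point of its $\omega$-limit set to extract a genuine bi-infinite minimal orbit of periodic type before invoking the connectedness of $M$.
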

\begin{proof}
Let us prove first that no minimal trajectory or ray can lie on $L$.
We have $\pi|_L$ is not a diffeomorphism and due to the condition of
chain-recurrence we have two possible cases for the dynamics on $L$.
In the first case the orbits are not closed and therefore each orbit
must intersect the singularity curves of theorem \ref{caustics}
infinitely many times. Each such intersection increases by one the
Morse index of the corresponding geodesic. Thus it can not be
minimal. In the second case, all the trajectories on $L$ are closed,
then all of them project to geodesics of the same length, since the
torus $L$ is Lagrangian (as a torus in the (co-) tangent bundle).
Some of them necessarily intersect singularity curves and therefore
not minimal as explained above. But all of them have the same
length, therefore all the orbits on $L$ are not minimal.

It is simple to rule out the case when $I(L)$ lies in a ball, since
it cannot happen for minimal trajectory to lie in a ball. Because
otherwise any lift of the minimal geodesic to the universal cover
would be bounded on $\widetilde{\Sigma}$ which is impossible.

Suppose now that $I(L)$ is a solid torus. Assume by contradiction
that there exists a minimal half-trajectory inside $I(L)$ which
corresponds to a ray. Then there exists a point in $\omega$-limit
set of this half-trajectory. The orbit of this point is a minimal
orbit (by a limiting argument). This minimal orbit must lie also
inside $I(L)$ because as was proved above, it can not lie on the
boundary. In addition, this orbit must be of a periodic type,
determined by the core of the solid torus. Then by the first
statement of theorem \ref {connectedness} all minimal orbits of this
periodic type must stay inside $I(L)$ and by the second statement,
fiber of any point of the covering $\widetilde{\Sigma}$ can be
connected by a minimal trajectory to one of them lying inside. But
this is impossible since $L$ is invariant and can not be crossed.
\end{proof}
\section{Proof of main theorems}
\label{proofs} In this section we prove the theorems \ref {genus}
and then \ref {torus}.

\begin{proof}[Proof of theorem \ref {genus}]
The idea is very simple. If $\Sigma$ is of genus $p>1$ then it
follows from theorem \ref{birkhoff} that all regular torii are
compressible. Therefore, by theorem \ref {I(L)} all minimal
geodesics and rays lie outside their "interior" components and
belong to singular levels of the integral $F$. Denote by $R$ the
class of all regular torii and define the set
$$ Z=T_1\Sigma-\bigcup_{L\in R}I(L).$$
This is a compact invariant set.
\begin {lemma}
\label{Z} The set $Z$ is an invariant continuum, i.e. it is compact,
connected invariant set.
\end{lemma}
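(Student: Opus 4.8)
The plan is to verify the three asserted properties separately; compactness and invariance are quick, and connectedness is the only substantial point. Each $I(L)$ is open, being the interior of a solid torus or of a component sitting inside a $3$-ball, so $U:=\bigcup_{L\in R}I(L)$ is open and $Z=T_1\Sigma\setminus U$ is closed in the compact space $T_1\Sigma$, hence compact. For invariance, every regular torus $L$ is invariant under $g^t$, so $g^t$ is a homeomorphism fixing $L$ and therefore permuting the connected components of $T_1\Sigma\setminus L$; since $g^0=\mathrm{id}$, the map $t\mapsto g^t(I(L))$ is continuous while the components form a discrete set, so $g^t(I(L))=I(L)$ for all $t$. Thus $U$, and hence $Z$, is invariant.

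For connectedness I would first record a nesting dichotomy. Distinct regular tori are disjoint, being components of level sets of $F$, and for two disjoint regular tori $L_1,L_2$ the interiors $I(L_1),I(L_2)$ are either disjoint or comparable under inclusion. Indeed, since $L_1\cap L_2=\emptyset$, $L_1$ lies in a single component of $T_1\Sigma\setminus L_2$, and a short case check (ruling out mutual containment, which would force $L_1\subset I(L_1)$) shows that $I(L_1)\cap I(L_2)\neq\emptyset$ implies $I(L_1)\subset I(L_2)$ or $I(L_2)\subset I(L_1)$.

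The heart of the matter is a path-surgery. Suppose, for contradiction, that $Z=A\sqcup B$ with $A,B$ nonempty, disjoint and compact. Choose $z_1\in A$, $z_2\in B$, a path $\gamma$ in the connected manifold $T_1\Sigma$ from $z_1$ to $z_2$, and set $t^{\ast}=\sup\{t:\gamma(t)\in A\}$. Then $\gamma(t^{\ast})\in A$, and for $t$ slightly larger than $t^{\ast}$ one has $\gamma(t)\in U$: it cannot lie in $B$, else $\gamma(t^{\ast})\in\overline{B}=B$. Let $(t^{\ast},t^{\ast\ast})$ be the maximal subinterval on which $\gamma$ runs inside $U$. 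Each compact subinterval of it is covered by finitely many interiors which, overlapping along the connected arc, are nested by the dichotomy above and hence contained in a single $I(L)$; exhausting $(t^{\ast},t^{\ast\ast})$ yields an increasing chain of such interiors whose union $W$ is either one $I(L)$ or, when the chain does not stabilize, an open region bounded by a limiting invariant torus. In either case both endpoints $\gamma(t^{\ast}),\gamma(t^{\ast\ast})$ lie on the bounding torus, call it $L$. This $L$ lies entirely in $Z$: if a point of it lay in some $I(L')$, then, being a connected torus disjoint from $L'$, all of $L$ would lie in $I(L')\subset U$, contradicting $\gamma(t^{\ast})\in Z$. Being connected and meeting $A$ at $\gamma(t^{\ast})$, the torus $L$ lies in $A$; but then $\gamma(t^{\ast\ast})\in A$ with $t^{\ast\ast}>t^{\ast}$, contradicting the definition of $t^{\ast}$. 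Hence $Z$ is connected, and combined with the above it is an invariant continuum.

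The main obstacle is exactly this connectedness step, and within it the non-stabilizing case: one must handle excursions of $\gamma$ into $U$ that are trapped not in a single solid torus but in an increasing chain of them with no maximal element, and verify that the limiting boundary is genuinely a connected invariant torus lying in $Z$ along which the path can be rerouted. This is where the structural input on invariant tori (Theorem \ref{birkhoff}) is needed, whereas compactness, invariance, and the elementary nesting dichotomy are routine; Theorem \ref{I(L)} then guarantees that $Z$ is the natural receptacle for all minimal geodesics and rays, which is its intended role in the proof of Theorem \ref{genus}.
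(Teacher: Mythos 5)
Your compactness and invariance arguments are fine, and the nesting dichotomy for the interiors $I(L_1),I(L_2)$ of disjoint regular tori is a correct and useful observation. But the connectedness step, which you yourself identify as the heart of the matter, has a genuine gap precisely where you flag it: the non-stabilizing case. When the increasing chain $I(L_1)\subset I(L_2)\subset\cdots$ covering an excursion of $\gamma$ into $U$ has no maximal element, you assert that $W=\bigcup_n I(L_n)$ is ``an open region bounded by a limiting invariant torus'' and appeal to Theorem \ref{birkhoff}. That theorem does not deliver this: in the setting where Lemma \ref{Z} is actually used (genus $p>1$, in the proof of Theorem \ref{genus}) all the tori $L_n$ are \emph{compressible}, and the Lipschitz-graph compactness of Theorem \ref{birkhoff} applies only to incompressible sections of $T_1\mathbb{T}^2$. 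There is no a priori control forcing the boundaries $L_n$ to converge to a torus; $\partial W$ could a priori fail to be a manifold and, more to the point for your surgery, could fail to be connected. What your rerouting really needs is only that $\gamma(t^{\ast})$ and $\gamma(t^{\ast\ast})$ lie in one connected subset of $Z$, i.e.\ that $\partial W$ is connected and contained in $Z$, and neither is established. (Showing $\partial W\subset Z$ is doable with your dichotomy; showing $\partial W$ is connected is the real issue.)

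It is worth noting that the paper avoids all of this with a much shorter argument: by second countability the union $\bigcup_{L\in R}I(L)$ equals a countable union $\bigcup_j I(L_j)$, so $Z=\bigcap_n Z_n$ with $Z_n=T_1\Sigma-\bigcup_{j\le n}I(L_j)$ a nested decreasing sequence of compact connected sets, and a nested intersection of continua in a compact Hausdorff space is a continuum. The honest way to close your gap is essentially to import that same device locally, e.g.\ writing $\partial W=\bigcap_n\bigl(\overline{W}\setminus I(L_n)\bigr)$ as a nested intersection of continua; at that point the path surgery becomes redundant and you may as well run the paper's global argument from the start.
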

We complete first the proof of the theorem. Notice that by the very
construction, there are no regular torii left in $Z$ since deleting
$I(L)$ for all regular $L$ we delete, of course, $L$ itself since
nearby torii are regular either. So in particular all regular levels
of $F$ are deleted. Thus, $Z$ lies in a union of all singular
levels. So $F$ attains only critical values on $Z$. By Morse-Sard
theorem (proved in this case by A.Morse \cite {Antony}) for $F$ and
by the lemma it follows that $F$ on $Z$ attains a unique critical
value. Moreover, the set of all tangent vectors to all minimal
geodesics and rays lie in $Z$ by theorem \ref {I(L)}. By theorem
\ref {rays} there are uncountably many of them in a fibre of any
point $x\in\Sigma$. By condition $\aleph$ this forces $F$ to be a
constant. This completes the proof of theorem \ref {genus}.
\end{proof}
\begin{proof}[Proof of lemma \ref {Z}]
Since the unite cotangent bundle $T_1\Sigma$ is second-countable
topological space then, the union $\bigcup_{L\in R}I(L)$ can be
replaced by at most countable union. Therefore there exist a
sequence of torii $L_n\in R,n\geq 1$ such that $$\bigcup_{L\in
R}I(L)=\bigcup_{j=1}^\infty I(L_{j}).$$
 Then one can write
$$Z=T_1\Sigma-\bigcup_{j=1}^\infty I(L_{j})=\bigcap_{n=1}^\infty Z_n,$$
where $$Z_n=T_1\Sigma-\bigcup_{j=1}^n I(L_{j}).$$ It is clear that
each $Z_n$ is compact and connected and $Z_{n+1}\subseteq Z_n$.
Thus, $Z$ is compact and connected either as an intersection of
nested sequence of compact connected sets. This proves the lemma.
\end {proof}

\begin{proof}[Proof of theorem \ref {torus} and corollary \ref{rational}]
We claim first that there exists a torus $L$ in $T_1\mathbb{T}^2$
which lies in a regular level of $F$ and is not compressible.
Indeed, otherwise all torii of regular levels $F$ would be
compressible. Then one can can define the set $Z$ and proceed
exactly as in the proof of theorem \ref{genus}, getting a
contradiction with the assumption that $F$ is not a constant
function. So incompressible $L$ exists. Apply theorem \ref{birkhoff}
in order to get that $L$ can be written as a graph of a smooth
function $f:\mathbb{T}^2\rightarrow\mathbb{S}^1$ and all the orbits
on $L$ are minimal geodesics. There are two cases which may occur
for the dynamics on $L$ (recall $L$ is satisfies Liouville-Arnold
theorem). In the first case all the orbits are closed and then we
are done. In the second case geodesics of $L$ have irrational
rotation number. Since $L$ lies in a regular level one can take
torus $\tilde{L}$ lying in a a nearby levels which are also
invariant graphs consisting of minimal orbits. We claim that in the
process of such perturbation the rotation number must vary. Indeed
if the rotation number of $L$ and $\tilde{L}$ were the same
irrational number, it would imply that two different minimal
geodesics, coming from $L$ and $\tilde{L}$, with the same irrational
rotation number intersect. But this is impossible. Moreover, since
the rotation number is a continuous function,  then this claim
implies that it can be made rational by such a perturbation.
\end{proof}
\section{Separatrix chains}
\label{separatrix} In this section we prove theorems \ref{zones} and
\ref{chains}. We shall use heavily the ordering properties of
minimal geodesics on $\mathbb{T}^2$. The following lemma enables to
repeat the proof of theorem \ref {genus} for the case of theorem
\ref{zones}.
\begin{lemma}
Let  $N$ be a domain $T_1\mathbb{T}^2$ bounded by two disjoint
graphs $L_1,L_2$ invariant under the geodesic flow. Then for any
point $x\in\mathbb{T}^2$ there exist uncountably many rays, such
that the corresponding half-orbits lie in $N$.
\end{lemma}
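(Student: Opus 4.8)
The plan is to reduce the statement to a count of rays in a single fibre by exploiting that $N$ is a flow-invariant region, and then to produce the rays from the parametrisation by slope of Theorem \ref{rays}. First I would record the key observation that $N$ is invariant under the geodesic flow: its topological boundary is contained in $L_1\cup L_2$, and each $L_i$ is invariant, so if an orbit issuing from $N$ were to leave it, it would have to meet $\partial N\subseteq L_1\cup L_2$ at some first time $t_1$; invariance of $L_i$ together with uniqueness of the flow would then force the whole orbit, in particular its starting vector, to lie on $L_i$, contradicting that the starting vector lies in the open set $N$. Hence a forward half-orbit lies in $N$ if and only if its initial vector lies in $N$. Trivialising $T_1\mathbb{T}^2=\mathbb{T}^2\times\mathbb{S}^1$ as in Theorem \ref{birkhoff}, the disjoint sections $L_1,L_2$ are graphs of $f_1,f_2:\mathbb{T}^2\to\mathbb{S}^1$ with $f_1(q)\neq f_2(q)$ everywhere, so over $x$ the fibre circle is split by $f_1(x),f_2(x)$ into two open arcs and $N$ meets the fibre in one of them, say $A(x)$. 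Thus the lemma is reduced to producing, for each $x$, uncountably many rays from $x$ whose initial direction lies in the open arc $A(x)$.

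Second, I would use Theorem \ref{birkhoff}(3): the orbits of $L_1,L_2$ project to minimal geodesics, so the two geodesics $\gamma_1,\gamma_2$ leaving $x$ in the directions $f_1(x),f_2(x)$ are themselves rays, of slopes $\rho_1,\rho_2$, whose initial directions are exactly the endpoints of $A(x)$. By Theorem \ref{rays} there is, for every slope $s\in\mathbb{S}^1$, a ray from $x$ of slope $s$. The crucial geometric input is the monotonicity of the map sending the slope of a ray to its initial direction at $x$: lifting to $\mathbb{R}^2$, two distinct rays from $\tilde x$ meet only at $\tilde x$, since minimal geodesics cross at most once, so they are disjoint arcs running off to infinity, and a planar Jordan-arc argument shows that their asymptotic slopes occur in the same cyclic order as their initial tangent directions. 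Hence the resulting order-preserving circle map carries one of the two open slope-arcs determined by $\rho_1,\rho_2$ into $A(x)$. Provided $\rho_1\neq\rho_2$, that slope-arc is nondegenerate and contains uncountably many slopes; each yields a ray from $x$ with initial direction in $A(x)$, and distinct slopes give distinct rays, hence (since the initial vector determines the geodesic) distinct initial directions. By the invariance of $N$ these uncountably many rays have their half-orbits in $N$, which is the assertion.

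The main obstacle is the degenerate case $\rho_1=\rho_2$, in which the slope-arc collapses and the argument above gives nothing; I would dispose of it by showing it cannot occur for \emph{disjoint} invariant sections. If $\rho_1=\rho_2$ were irrational, the leaves of $L_1$ and $L_2$ would be mutually non-crossing minimal geodesics of a common irrational slope filling $\mathbb{T}^2$, i.e.\ an ordered family which is then a genuine foliation and is unique, forcing $L_1=L_2$ against disjointness. In the rational case $\rho_1=\rho_2=p/q$ I would instead invoke the Morse--Hedlund heteroclinic theorem quoted just before Theorem \ref{connectedness}: between the non-crossing periodic minimal geodesics carried by $L_1$ and $L_2$ there exist heteroclinic minimal geodesics in the strip between them, hence in $N$, and combining this with Theorem \ref{rays}(2), which produces through the given $x$ rays asymptotic to this periodic type, one recovers an uncountable family inside $N$. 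Arranging that this last step yields uncountably many rays through the \emph{fixed} point $x$, rather than merely countably many lattice translates, is the delicate point, and is precisely where the separation-zero property of Theorem \ref{connectedness}(1) must be used.
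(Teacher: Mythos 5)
Your main line of argument is, in substance, the paper's: reduce to the initial vector lying in $N$ by flow-invariance, use Theorem \ref{rays} to get a ray from $x$ of every slope, and use the fact that two minimal geodesics cross at most once to see that the cyclic order of slopes matches the cyclic order of initial directions at $x$, so that the rays with slopes in the appropriate arc between the rotation numbers $\rho_1,\rho_2$ of $L_1,L_2$ start in $N$ and stay there. That part is carried out correctly and in rather more detail than the paper gives, and it does yield uncountably many rays once $\rho_1\neq\rho_2$.

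The genuine gap is your handling of the degenerate case $\rho_1=\rho_2=p/q$. You do not rule it out; you instead try to manufacture uncountably many rays from the \emph{fixed} point $x$ out of the Morse--Hedlund heteroclinics, and you concede that getting uncountably many such rays through a prescribed point is unresolved. That route cannot work as stated: Theorem \ref{rays}(2) guarantees at least one ray of the given periodic type through $x$, not uncountably many, and the separation-zero property of Theorem \ref{connectedness} concerns $\epsilon$-chains between minimal geodesics of that type, not a one-parameter family of rays emanating from a single point. The correct move --- and the one the paper makes --- is that this case simply cannot occur. If both disjoint invariant graphs had rotation number $p/q$, each would carry a periodic minimal geodesic of that type (all orbits on an invariant graph project to minimal geodesics, by Theorem \ref{birkhoff}). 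Take $x$ on the periodic minimal geodesic $\gamma_1$ carried by $L_1$; the orbit of $L_2$ over $x$ projects to a \emph{different} minimal geodesic of the same slope meeting $\gamma_1$ at $x$, whereas two distinct minimal geodesics of a common rational slope can meet only when both are (non-periodic) heteroclinic connections. This contradiction, together with your correct exclusion of the equal-irrational case, shows $\rho_1\neq\rho_2$ always, after which your main argument finishes the proof. So the gap is real, but it is closed by the very crossing facts you already invoke --- used to kill the degenerate case rather than to argue inside it.
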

\begin{proof}[Proof of the lemma]
It follows from the ordering properties of minimal geodesic on the
covering plane. Any two minimal geodesic of different slopes
intersect exactly at one point. Moreover, two different minimal
geodesics of the same slope can intersect only if their slope is
rational and the geodesics are the pair of heteroclinic connections.
Using this fact and the fact that the orbits lying on $L_1, L_2$ (by
theorem \ref{birkhoff}) are minimal, one can conclude that the
rotation numbers of $L_1, L_2$ must be different. Indeed, otherwise
$L_1, L_2$ would have the same rational rotation number and both
would contain minimal periodic geodesics. This would contradict
their disjointness. Therefore, all minimal geodesics and rays
starting at $x$ with slopes in the interval between the rotation
numbers of $L_1$ and $L_2$ lie also in between the torii $L_1, L_2$.
There are uncountably many of them. This completes the proof of the
lemma.
\end {proof}

\begin{proof}[Proof of theorem \ref{chains}]
Let $F$ be a non-constant integral of the geodesic flow. Introduce
any Riemannian metric on $T_1\mathbb{T}^2$ and the vector field
$v=\nabla F /|\nabla F|^2$, defined on an open set of regular points
of $F$. Denote by $v^t$ the flow of $v$. For every regular point
point $x$ there exists a maximal finite open interval of existence
of trajectory $v^t(x)$. For any regular invariant torus $L$ let
$(\alpha(L), \beta(L))$  be the maximal open interval of existence
of $v^t(x)$ for all $x\in L$. At the moments $\alpha(L), \beta(L)$
some points of $L$ tend to the set of critical points. Obviously the
torii $L_t:=v^t(L)$ are also regular invariant torii for any , $t\in
(\alpha(L), \beta(L)).$
 Denote by $B(L)$ the domain swept by $L_t$,
 $$B(L)=\bigcup_{t\in
(\alpha(L), \beta(L))} L_t .$$ Assume now that $L$ is a regular
section. Then it follows from theorem \ref{birkhoff} that all $L_t$
are uniformly $K$-Lipshits. Therefore it follows from Arzela-Ascoli
theorem that there exist uniform limits which are also $K$-Lipshitz
torii:
$$L_{\alpha}=\lim_{t\rightarrow \alpha(L)}L_t,\quad L_{\beta}=\lim_{t\rightarrow
\beta(L)}L_t.$$ Both of the limits are singular invariant torii.
Denote by $B$  the  open set of all regular invariant sections, i.e
$B=\bigcup_L B(L).$ Assume from now on that the integral $F$
satisfies condition $\aleph$ , then by theorem \ref {torus} the set
$B$ is not empty. Fix a regular section $L_0$ in $B$ for the rest of
the proof. Fix an orientation on the fibres. It determines the order
on $B-L_0$. For any regular section $L$ disjoint from $L_0$, there
are two regions between them, $[L_0,L]$ and $[L,L_0]$, in accordance
with orientation of the fibres. Let us describe the complement to
$B$. For any point $P\in T_1\mathbb{T}^2-B$. Define
$$L_+= \inf \{L: L\in B, L\neq L_0 ,
P\in [L_0,L]\},$$ $$L_-=sup\{L:L\in B, L\neq L_0, P \in [L,
L_0]\}.$$ By Arzela-Ascoli theorem these are $K$-Lipshits invariant
sections which are singular and $$L_+\subset [L_-,L_0],\quad
L_-\subset[L_0,L_+].$$ It follows by theorem \ref {zones} that
$L_+,L_-$ cannot be disjoint. It may happen that they coincide, in
this case we are done. In the other case, they have a nontrivial
intersection. Then the rotation number for both of them is the same,
since they have common orbits, and moreover, it must be rational,
because any two minimal geodesics of the same irrational rotation
number can not cross each other. Therefore the intersection $L_+\cap
L_-$ consists of periodic minimal geodesics. It is easy to see that
in fact $L_+\cap L_-$ coincides with the set of all minimal periodic
orbits of this rational type. This completes the proof.

\end{proof}
\section{Remarks on critical points of $F$.}
\label{critical} This section contain some simple facts about
critical points of $F$, for any smooth $F$ (not necessarily
satisfying condition $\aleph$). Let $\gamma$ be a minimal periodic
geodesic on $\Sigma$. We shall say it is deformable if it has a
neighborhood filled by minimal geodesics of the same type. The
following fact is simple: The trajectory of the geodesic flow
corresponding to a non-deformable minimal geodesic consist of
critical points of $F$. This is because, otherwise one could move
the closed orbit in $T_1\Sigma$ by the $f^t$ of $F$ and get a family
of closed orbits nearby. All of them project to closed minimal
geodesics. If for a given periodic type not all minimal geodesics
are closed then there always exist non-deformable minimal geodesic
of this type. For example if $p>1$ for every periodic type there
exists a non-deformable minimal geodesic. For $p=1$, given a
periodic type there exist a non-deformable geodesic of a given type
or there is an invariant torus, consisting of minimal periodic
orbits. Having enough non-deformable minimal geodesics one can take
their limits to get geodesics of other non-periodic types such that
their orbits in $T_1\Sigma$ also consist of critical points of $F$.
In such a way one proves that for $p>1$ and for any not periodic
type there always exist a minimal geodesic of this type such that
the corresponding orbit consists of critical points (These are so
called boundary geodesics in terminology of Morse \cite{Morse}).
Probably the same is true for all recurrent orbits for non-trivial
Aubry-Mather sets in the case of $\Sigma=\mathbb{T}^2$. We don't
know however how this information can be used further.


\end{document}